\def\moverlay{\mathpalette\mov@rlay}
\def\mov@rlay#1#2{\leavevmode\vtop{%
\baselineskip\z@skip \lineskiplimit-\maxdimen
\ialign{\hfil$\m@th#1##$\hfil\cr#2\crcr}}}
\newcommand{\charfusion}[3][\mathord]{
#1{\ifx#1\mathop\vphantom{#2}\fi
     \mathpalette\mov@rlay{#2\cr#3}}
     \ifx#1\mathop\expandafter\displaylimits\fi}
\newcommand\abs[1]{\lvert#1\rvert}
\theoremstyle{plain}
\newtheorem{theorem}{Theorem}
\newtheorem{lemma}[theorem]{Lemma}
\theoremstyle{definition}
\newtheorem{example}[theorem]{Example}
\newtheorem{conjecture}[theorem]{Conjecture}
\theoremstyle{remark}
\newcommand{\Sch}{Schr\"{o}der }
\begin{document}

\title{Enumeration of Fuss-Schr\"{o}der paths}

\author{Suhyung An}
\address{Department of Mathematics\\
         Yonsei University\\
         Seoul 120-749, Republic of Korea}
\email{hera1973@yonsei.ac.kr}

\author{JiYoon Jung}
\address{Department of Mathematics\\
         Marshall University\\
         Huntington, WV 25755}
\email{jungj@marshall.edu}

\author{Sangwook Kim}
\address{Department of Mathematics\\
         Chonnam National University\\
         Gwangju 500-757, Republic of Korea}
\email{swkim.math@chonnam.ac.kr}

\begin{abstract}
   In this paper 
   we enumerate the number of $(k, r)$-Fuss-Schr\"{o}der paths of type $\lambda$.
   Y. Park and S. Kim studied small Schr\"{o}der paths with type~$\lambda$.
   Generalizing the results to small $(k, r)$-Fuss-Schr\"{o}der paths with type $\lambda$,
   we give a combinatorial interpretation for 
   the number of small $(k, r)$-Fuss-Schr\"{o}der paths of type $\lambda$ 
   by using Chung-Feller style.
   We also give 
   two sets of sparse noncrossing partitions of $[2(k + 1)n + 1]$ and $[2(k + 1)n + 2]$
   which are in bijection with the set of 
   all small and large, respectively, $(k, r)$-Fuss-Schr\"{o}der paths of type $\lambda$.
\end{abstract}

\keywords{Fuss-\Sch paths, type, sparse noncrossing partitions}

\maketitle


\section{Introduction}
\label{sec-introduction}

   A Dyck path of length $n$ is a lattice path from $(0,0)$ to $(n,n)$
   using east steps $E = (1,0)$ and north steps $N = (0,1)$ such that 
   it stays weakly above the diagonal line $y = x$.
   It is well-known that the number of all Dyck paths of length $n$
   is given by the famous Catalan numbers
   $$
   \frac{1}{n+1} \binom{2n}{n}.
   $$
   
   A large \Sch path of length $n$ is a lattice path from $(0,0)$ to $(n,n)$
   using east steps~$E$, north steps $N$, and diagonal steps $D = (1,1)$
   staying weakly above the diagonal line $y = x$.
   The number of all large \Sch paths of length $n$ is 
   $$
   \frac{1}{n} \sum_{k=1}^n \binom{n}{k-1} \binom{n}{k} 2^k.
   $$
   A small \Sch path of length $n$ is a large \Sch path with no diagonal 
   steps on the diagonal line.
   The number of all small \Sch paths of length~$n$ is the half of the number of
   large \Sch paths of length $n$.
   Note that a Dyck path is a large \Sch path without using diagonal steps.
   
   For a large \Sch path (and hence for a Dyck path), its \emph{type}
   is the integer partition formed by the length of the maximal adjacent 
   east steps.
   For example, the large \Sch path $NENNNEEDNEEDNE$ has type 
   $\lambda = (2, 2, 1, 1)$.
   The enumeration of Dyck paths by type is done since 
   Kreweras~\cite{Kreweras} introduced noncrossing partitions 
   while the enumeration of large \Sch paths by type is recently done by 
   An, Eu, and Kim~\cite{AnEuKim}.
   The number of small \Sch paths of given type is \emph{not} the 
   half of the number of large \Sch paths and it is enumerated by 
   Park and Kim~\cite{ParkKim}.
   
   Now we introduce Fuss analogue of Dyck and \Sch paths.
   Given a positive number~$k$, a $k$-Fuss-Catalan path of length $n$ is
   a path from $(0,0)$ to $(n,kn)$ using east steps~$E$ and north steps $N$
   such that it stays weakly above the line $y = kx$.
   The number of all $k$-Fuss-Catalan paths of length $n$ is given by
   the Fuss-Catalan numbers 
   $$
   \frac{1}{kn+1} \binom{(k+1)n}{n}
   $$
   and Armstrong~\cite{Armstrong} enumerates the number of 
   $k$-Fuss-Catalan paths of given type.
   
   For $k, r$ $(1 \le r \le k)$, a large $(k, r)$-Fuss-\Sch path of length $n$ is 
   a path~$\pi$ from $(0,0)$ to $(n, kn)$ using east steps, north steps, and 
   diagonal steps that satisfies the following two conditions:
   \begin{enumerate}[(C1)]
      \item
      the path $\pi$ never passes below the line $y =kx$, and 
      \item
      the diagonal steps of $\pi$ are only allowed to go from 
      the line $y = kj + r - 1$ to the line $y =kj +r$, for some $j$.
   \end{enumerate}
   A small Fuss-\Sch path is a large Fuss-\Sch path with no diagonal steps touching 
   the line $y =kx$.
   The number of small $(k, r)$-Fuss-\Sch paths with fixed length and number 
   of diagonal steps is independent of $r$ and it is given 
   by Eu and Fu~\cite{EuFu}.
   
   We will provide the number of small $(k,r)$-Fuss-\Sch
   paths of given length and type. 
   We also give two conjectures about Fuss-\Sch paths and sparse noncrossing partitions
   which might be useful for the formula for the number of large
   $(k,r)$-Fuss-\Sch paths of given type and length.
   
\section{Dyck and \Sch paths by type}
\label{sec-previous-results}

   In this section, we introduce previous results about the numbers of Dyck and 
   \Sch paths with given length and type.
   
   Given an integer partition $\lambda$, 
   we set $m_\lambda := m_1(\lambda)! m_2(\lambda)! m_3(\lambda)! \cdots$, 
   where $m_i(\lambda)$ is the number of parts of $\lambda$ equal to $i$. 
   Note that $m_\lambda$ here is not the monomial symmetric function.
   We use $\abs{\lambda}$ for the sum of the parts of $\lambda$.

   First, we begin with the number of Dyck paths of given type.   
   Kreweras~\cite{Kreweras} shows the following theorem using recursions and 
   Liaw et al.~\cite{Liaw} give a bijective proof.
   
   \begin{theorem}
      The number of 
      \begin{enumerate}
         \item
         Dyck paths of length $n$ with type $\lambda = (\lambda_1, \dots, \lambda_\ell)$
         \item
         noncrossing partitions of $[n]$ with type $\lambda$
      \end{enumerate}
      is
      $$
      \frac{n (n-1) \cdots (n -\ell(\lambda) + 2)}{m_\lambda}.
      $$
   \label{dyck}
   \end{theorem}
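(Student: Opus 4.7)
The plan is to first pass between the two sets in (1) and (2) via a type-preserving bijection, and then enumerate either side directly.

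For the bijection, I would define a map from noncrossing partitions to Dyck paths as follows. Given $\pi$ of type $\lambda$, scan $i = 1, 2, \ldots, n$ in order: at step $i$, write one $N$, and if additionally $i$ is the maximum of its block (of size $s$), append $s$ consecutive $E$ steps. The resulting word has $n$ copies of each symbol, and the height after step $i$ equals the number of elements of $[i]$ lying in blocks whose maximum exceeds $i$, hence is always nonnegative, so this is a valid Dyck path. Each maximal east run comes from exactly one block-closure event and has length equal to the corresponding block's size, so the type is preserved. The inverse map scans the Dyck path's maximal east runs left-to-right and uses a stack to pop the appropriate preceding, unmatched north-step indices into each block. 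This establishes $|{\rm (1)}| = |{\rm (2)}|$.

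For the enumeration, I would count the noncrossing partitions directly. The key step is to pass to \emph{ordered} noncrossing partitions, whose $\ell$ blocks are labeled $1, \ldots, \ell$ with the block labeled $i$ having size $\lambda_i$. Each unordered partition of type $\lambda$ admits exactly $m_\lambda$ such labelings, so it suffices to prove that the number of ordered noncrossing partitions with prescribed size sequence $(\lambda_1, \ldots, \lambda_\ell)$ equals $n(n-1)\cdots(n-\ell+2)$, independently of the individual sizes. One can establish this either by induction on $\ell$, using a surgery that removes or relocates a chosen block while carefully tracking the number of extensions, or via an explicit bijection from such ordered partitions to ordered $(\ell-1)$-tuples of distinct elements of $[n]$, as in the approach of Liaw et al. Dividing by $m_\lambda$ gives the claimed formula.

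The main obstacle is the size-independence built into the formula: for fixed $n$ and $\ell = \ell(\lambda)$, the count of ordered noncrossing partitions with the specified size sequence depends only on $n$ and $\ell$, not on how $n$ is apportioned among the blocks. This symmetry is not visually apparent from the noncrossing condition, and any proof---inductive or bijective---must gracefully account for it. Constructing the direct bijection to $(\ell-1)$-permutations of $[n]$ is the most conceptually satisfying route, and requires choosing the right invariants (for instance, the positions of block-minima, appropriately reparameterized relative to the labels) so that the construction is manifestly insensitive to the specific values of the $\lambda_i$.
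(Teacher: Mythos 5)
The paper does not actually prove this theorem; it quotes it as a known result, citing Kreweras for a proof by recursion and Liaw--Yeh--Hwang--Chang for a bijective proof. So there is no internal argument to compare against, and your proposal has to be judged as a standalone proof. Its first half is fine: the map sending a noncrossing partition to the word $N w_1 N w_2 \cdots N w_n$, where $w_i = E^{|B|}$ when $i$ is the maximum of its block $B$ and $w_i$ is empty otherwise, is the standard type-preserving bijection, your height computation correctly verifies the Dyck condition, and the stack-based inverse is sound (noncrossingness guarantees that when a block closes, its elements are exactly the top entries of the stack). The reduction from unordered to ordered partitions, with the factor $m_\lambda$ counting the relabelings of equal-sized blocks, is also correct.

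The gap is in the second half: the assertion that the number of noncrossing partitions of $[n]$ with an ordered sequence of block sizes $(\lambda_1,\dots,\lambda_\ell)$ equals $n(n-1)\cdots(n-\ell+2)$, independently of the $\lambda_i$, \emph{is} the theorem --- everything else in your argument is bookkeeping around it. You correctly identify this as the crux and even name the obstacle (the size-independence of the count), but you do not carry out either of the two routes you mention: no induction with a specified surgery and a verified count of extensions, and no explicit construction of the bijection to $(\ell-1)$-tuples of distinct elements of $[n]$ with a proof that it is well defined and invertible. Deferring to ``the approach of Liaw et al.'' at exactly this point means the proof, as written, establishes only the easy equivalence $|{\rm (1)}|=|{\rm (2)}|$ and the trivial passage to ordered partitions, while the enumerative content remains an unproved citation. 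To complete the argument you would need to actually exhibit the bijection (for instance, recording for each block after the first a suitably normalized ``insertion position'' in $[n]$, distinct across blocks) and check both injectivity and surjectivity, or set up the induction on $\ell$ and verify that removing a block multiplies the count by the correct factor.
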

   
   If diagonal steps are allowed, Theorem~\ref{dyck} is generalized to \Sch path cases.
   An, Eu, and Kim~\cite{AnEuKim} enumerate the number of large \Sch paths
   of given length and type.
   
   \begin{theorem}
      The number of 
      \begin{enumerate}
         \item
         large \Sch paths of length $n$ with type 
         $\lambda = (\lambda_1, \dots, \lambda_\ell)$ 
         \item
         sparse noncrossing set partitions of $[n + \abs{\lambda} + 1]$ with arc type $\lambda$
      \end{enumerate}
      is
      $$
      \frac{1}{\abs{\lambda}+1} \binom{n}{\abs{\lambda}} \binom{n+1}{\ell}
      \frac{\ell !}{m_\lambda}.
      $$
   \end{theorem}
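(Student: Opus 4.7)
The plan is to enumerate the paths directly by refining over ordered compositions of east-run lengths, and to construct a type-preserving bijection with sparse noncrossing set partitions separately. A large \Sch path of length $n$ with type $\lambda$ uses $\abs{\lambda}$ east steps (in $\ell$ runs whose lengths form the multiset $\lambda$), $\abs{\lambda}$ north steps, and $n - \abs{\lambda}$ diagonal steps. Summing over the $\ell!/m_\lambda$ ordered compositions $\alpha = (\alpha_1, \ldots, \alpha_\ell)$ that rearrange $\lambda$, it suffices to show that the number of paths with east-runs of lengths $\alpha_1, \ldots, \alpha_\ell$ (in that left-to-right order) equals $\frac{1}{\abs{\lambda}+1}\binom{n+1}{\ell}\binom{n}{\abs{\lambda}}$, independently of the specific $\alpha$.

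This per-composition count will split into two factors. First, viewing the path as a sequence of $n + \ell$ macro-steps (one per east-run, north step, or diagonal step), the number of macro-step arrangements with east-runs pairwise non-adjacent (so the composition is preserved upon expansion back to unit steps) is $\binom{n+1}{\ell}\binom{n}{\abs{\lambda}}$: arrange the $\abs{\lambda}$ N's and $n - \abs{\lambda}$ D's among the $n$ non-east macro-steps in $\binom{n}{\abs{\lambda}}$ ways, then slot the $\ell$ east-runs into $\ell$ of the $n+1$ resulting gaps in $\binom{n+1}{\ell}$ ways. Second, expanding each east-run into its individual east unit-steps yields a Motzkin-like walk of length $n + \abs{\lambda}$ with $\abs{\lambda}$ up-steps (from N's), $\abs{\lambda}$ down-steps (from E's), and $n - \abs{\lambda}$ flats (from D's), of net sum zero. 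The factor $\frac{1}{\abs{\lambda}+1}$ imposing the above-diagonal condition is extracted by prepending an auxiliary up-step and invoking the Dvoretzky--Motzkin cycle lemma on the resulting length-$(n + \abs{\lambda} + 1)$, sum-$(+1)$ sequence, which yields exactly one good rotation per cyclic orbit.

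For the interpretation in terms of sparse noncrossing set partitions of $[n + \abs{\lambda} + 1]$, I would construct a type-preserving bijection by labeling the initial point together with each of the $n + \abs{\lambda}$ unit steps as $1, \ldots, n + \abs{\lambda} + 1$, and then applying a Kreweras--Narayana-style matching: each east step is paired with the nearest unmatched preceding north step, yielding a block of size $\lambda_i + 1$ for each east-run of length $\lambda_i$, while each diagonal step contributes a singleton. The resulting partition is noncrossing (matched pairs nest) and sparse (any two distinct elements of a single block are separated by matched steps in other blocks), and has arc type $\lambda$ by construction.

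The main obstacle is justifying the $\frac{1}{\abs{\lambda}+1}$ cycle-lemma factor at the composition-fixed level, since cyclic rotations of a unit-step sequence need not preserve the east-run composition -- they can split or merge runs at the wrap-around. A rigorous argument requires either a weighted cycle lemma sensitive to composition structure, or an orbit-by-orbit analysis; an alternative route is to count the sparse noncrossing set partitions directly (e.g., via a refinement of Kreweras's formula combined with inclusion-exclusion on the consecutive-pair events that violate sparseness), and then invoke the bijection to transfer the count back to large \Sch paths.
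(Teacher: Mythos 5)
The paper does not actually prove this theorem; it quotes it from An, Eu, and Kim, so the only internal point of comparison is the Chung--Feller-style argument the authors give for Theorem~\ref{th-small-rule}. Measured against what a complete proof would require, your proposal has a genuine gap, and it is located exactly where you suspected---but the situation is worse than an unjustified step. The reduction ``it suffices to show that the number of paths with east-runs of lengths $\alpha_1,\dots,\alpha_\ell$ in that left-to-right order equals $\frac{1}{\abs{\lambda}+1}\binom{n+1}{\ell}\binom{n}{\abs{\lambda}}$, independently of $\alpha$'' is false. Take $n=3$ and $\lambda=(2,1)$: the theorem correctly predicts $\frac14\binom{3}{3}\binom{4}{2}\cdot 2=3$ paths, namely $NNENEE$, $NNEENE$, $NENNEE$, but two of these have run-composition $(1,2)$ and only one has composition $(2,1)$; the common per-composition value your plan needs would be $\tfrac{3}{2}$, which is not even an integer. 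The $\ell!/m_\lambda$ factor in formulas of this shape does not come from summing a constant over left-to-right orderings of the runs, so the entire first half of the argument cannot be repaired by a ``weighted cycle lemma'' that still conditions on the composition. The underlying reason is the one you identified: cyclic rotation of the unit-step sequence splits and merges east runs at the wrap-around, so neither a fixed composition class nor even the fixed type class is a union of cyclic orbits, and the Dvoretzky--Motzkin lemma cannot be applied orbitwise. (Note also that the naive cycle lemma on your length-$(n+\abs{\lambda}+1)$ sequence produces a factor $\frac{1}{n+\abs{\lambda}+1}$ relative to all linear arrangements, not the $\frac{1}{\abs{\lambda}+1}$ relative to the gap-count $\binom{n+1}{\ell}\binom{n}{\abs{\lambda}}$ that you need; these only reconcile after the type constraint is handled.)

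The known proofs, and the analogous argument in this paper for small Fuss-Schr\"oder paths, deal with precisely this obstruction by replacing rotation with hand-built, type-preserving ``add one flaw / remove one flaw'' operations on east runs (rules (1a)--(1d) and (2a)--(2d) in the proof of Theorem~\ref{th-small-rule}); these partition the relevant superset of paths into classes of the right size, each containing exactly one good path, which is what actually extracts the fractional factor. Your fallback suggestion---count the sparse noncrossing partitions directly and transfer through the bijection---is a legitimate alternative route, but as written both halves of it are also only sketched: the matching ``each east step with the nearest unmatched preceding north step'' produces pairs rather than the blocks of size $\lambda_i+1$ you assert, and the inclusion--exclusion over consecutive-pair events is not carried out. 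As it stands, the proposal correctly identifies the step counts and the target factorization but does not contain a proof.
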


   It is well-known that the number of small \Sch paths of length $n$ is the half of
   the number of large \Sch paths of length $n$.
   This is not the case when we count the number of small \Sch paths of fixed type.
   Park and Kim~\cite{ParkKim} provide the number of small \Sch paths of given length and type.
   
   \begin{theorem}
      The number of 
      \begin{enumerate}
         \item
         small \Sch paths of length $n$ with type $\lambda = (\lambda_1, \dots, \lambda_\ell)$
         \item
         large \Sch paths of length $n$ with type $\lambda$ 
         with no diagonal steps after the last up step
         \item
         connected sparse noncrossing set partitions of $[n + \abs{\lambda} + 1]$ 
         with arc type $\lambda$
      \end{enumerate}
      is given by
      $$
      \frac{1}{n+1} \binom{n-1}{\abs{\lambda}-1}\binom{n+1}{\ell}\frac{\ell!}{m_\lambda}.
      $$
   \end{theorem}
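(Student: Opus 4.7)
The theorem asserts three equivalences and a counting formula. My plan is to establish the bijections $(1) \leftrightarrow (2)$ and $(2) \leftrightarrow (3)$, and then derive the count.

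For $(1) \leftrightarrow (2)$, I would use a Chung--Feller style argument, as foreshadowed by the abstract. Both conditions forbid diagonal steps in specific, complementary locations: small \Sch paths have no diagonal step on the line $y = x$, while paths in (2) have no diagonal step after the last $N$ step (equivalently, the final $E$-run of length $\lambda_\ell$ is preceded by an $N$ rather than a $D$). My plan is to construct a natural operation on large \Sch paths of type $\lambda$ that rearranges diagonal step positions without changing $\lambda$, producing orbits each containing exactly one representative satisfying (1) and exactly one satisfying (2). Concretely, I would decompose a large \Sch path as $S_0 D_1 S_1 D_2 \cdots D_k S_k$, where each $D_i$ is a diagonal step on the main diagonal and each $S_i$ is a \Sch sub-path strictly above its local diagonal; a type-preserving cyclic shift of these blocks, combined with a rearrangement near the end of the path, should yield the bijection.

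For $(2) \leftrightarrow (3)$, I would invoke the An--Eu--Kim bijection from the previous theorem between large \Sch paths of type $\lambda$ and sparse noncrossing partitions of $[n + \abs{\lambda} + 1]$ with arc type $\lambda$. Tracing through this bijection, the condition ``no diagonal after the last $N$'' should translate into a structural connectedness property on the partition side. I plan to identify this condition explicitly: a natural candidate is that the partition cannot be split off at a distinguished rightmost element, or equivalently, the leftmost and rightmost elements lie in a common arc structure rather than being isolated.

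For the counting formula, given the bijection between (1) and (2) and the known large-\Sch count from the previous theorem, the small formula follows once we verify that the ratio of small to large counts equals $\frac{\abs{\lambda}(\abs{\lambda}+1)}{n(n+1)}$. This ratio could be established by a direct double-counting: count pairs consisting of a large \Sch path of type $\lambda$ together with an ordered pair of marked positions drawn from the $\abs{\lambda}+1$ ``east-slots'' versus from the $n+1$ ``vertical-slots,'' in two different ways.

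The main obstacle will be constructing the Chung--Feller bijection in $(1) \leftrightarrow (2)$ so that the action preserves the type $\lambda$. A naive cyclic shift of the step sequence changes $\lambda$, so the action must operate on carefully chosen blocks; verifying the orbit balance --- that each orbit contains exactly one small path and exactly one path of type (2), with the correct cardinality --- is the technical heart of the argument. The partition-side translation in $(2) \leftrightarrow (3)$ will also require a careful audit of the An--Eu--Kim map to pin down the correct connectedness condition.
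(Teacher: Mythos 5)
You should first be aware that the paper does not prove this statement: it is quoted in Section~2 as a prior result of Park and Kim~\cite{ParkKim}, and the paper's own contribution is the Fuss generalization (Theorem~\ref{FSpath}), whose $k=1$ case recovers only the count in item (1). The method there is a Chung--Feller argument run on an \emph{enlarged} ambient set --- all paths of type $\lambda$ from $(0,0)$ to $(n,n)$ that are allowed to dip below the diagonal, with diagonal steps confined to prescribed rows --- which has exactly $\binom{n-1}{\abs{\lambda}-1}\binom{n+1}{\ell}\frac{\ell!}{m_\lambda}$ elements and is partitioned, by explicit flaw-increasing and flaw-decreasing moves on east runs together with circular shifts, into orbits of size exactly $n+1$, each containing exactly one small path. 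Your route --- proving $(1)=(2)$ and $(2)=(3)$ and then extracting the count from the known large-Schr\"oder formula --- is genuinely different, and the equivalences with (2) and (3) are indeed part of the statement, so some version of those bijections is needed; but as written they are only named, not constructed.

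The concrete gap is in your counting step. An orbit structure on the set of large Schr\"oder paths of type $\lambda$ in which every orbit contains exactly one small path would force the (common) orbit size to be $\frac{n(n+1)}{\abs{\lambda}(\abs{\lambda}+1)}$, which is not an integer in general (for $n=4$, $\abs{\lambda}=2$ the ratio of small to large is $6/20$), so no equal-size cyclic action on the large paths themselves can produce the formula; the denominator $n+1$ must come from a cyclic action on a different, strictly larger set, which is exactly what the paper's proof of Theorem~\ref{FSpath} arranges. Your fallback --- a double count of large paths carrying an ordered pair of marks from $\abs{\lambda}+1$ east-slots against small paths carrying an ordered pair of marks from $n+1$ vertical-slots --- is dimensionally consistent with the ratio $\frac{\abs{\lambda}(\abs{\lambda}+1)}{n(n+1)}$, but no such two-marking bijection is exhibited or known to me, and the single-marking analogue visibly gives the wrong ratio $\frac{\abs{\lambda}+1}{n+1}$. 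Until either that marked bijection or the enlarged-set cycle lemma is actually constructed, the proposal establishes neither the equivalences nor the enumeration.
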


\section{Fuss-\Sch paths}
\label{sec-Fuss-Sch-paths}

   In this section, 
   we consider Fuss analogue of Dyck and \Sch paths of given length and type,
   i.e. Fuss-Catalan paths and Fuss-\Sch paths of fixed length $n$ with type $\lambda$.
   Our goal is 
   to enumerate the number of small Fuss-\Sch paths of length $n$ with type $\lambda$.
   
   First, we begin with the number of Fuss-Catalan paths of given type.
   
   \begin{theorem}
   [\cite{Armstrong}]
      The number of $k$-Fuss-Catalan paths of type $\lambda$ is
      $$
      \frac{(kn)!}{m_\lambda \cdot (kn + 1 - \ell (\lambda))!}.
      $$
   \end{theorem}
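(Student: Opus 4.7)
The plan is to adapt the classical cycle lemma (Dvoretzky--Motzkin/Raney) argument that produces the Fuss-Catalan numbers. I would augment each $k$-Fuss-Catalan path by prepending a single $N$-step, and work instead with words of length $L:=(k+1)n+1$ over $\{N,E\}$ with $n$ east-letters and $kn+1$ north-letters. Assigning weight $+1$ to each $N$ and $-k$ to each $E$ makes the total weight equal to $+1$, and I would call a linear word \emph{good} if every prefix sum is strictly positive. A good word must begin with $N$ (otherwise its first prefix sum is $-k<0$), and deleting this leading $N$ would set up a bijection between good linear words of length $L$ with east-run type $\lambda$ and $k$-Fuss-Catalan paths of length $n$ with type $\lambda$.

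Next I would apply Raney's cycle lemma: since every letter-weight is at most $1$ and the total weight equals $+1$, each cyclic equivalence class of such length-$L$ words contains exactly one good linear rotation. Any nontrivial period of such a word would have to divide both $n$ and $kn+1$ and hence equal $1$, so every cyclic class has exactly $L$ distinct linear representatives. Because a good word starts with $N$, the cyclic adjacency between its last and first (linear) east-runs is always separated by that initial $N$; hence no linear east-run is split by the cyclic wrap-around, and the multiset of cyclic east-run sizes coincides with the linear one. I could then conclude that cyclic classes of cyclic type $\lambda$ are in bijection with the good linear words of linear type $\lambda$.

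It would then remain to count cyclic words of cyclic type $\lambda$ containing $kn+1$ $N$-letters. Writing $\ell=\ell(\lambda)$, I would arrange the $\ell$ east-runs (with repeated sizes accounted for by $m_\lambda$) around a cycle in $(\ell-1)!/m_\lambda$ ways, then distribute the $kn+1$ $N$'s into the $\ell$ cyclic gaps with at least one $N$ per gap, in $\binom{kn}{\ell-1}$ ways by stars and bars. Multiplying,
\[
\frac{(\ell-1)!}{m_\lambda}\binom{kn}{\ell-1}=\frac{(kn)!}{m_\lambda\,(kn+1-\ell)!},
\]
which matches the claimed formula. The main subtlety I anticipate is the matching of linear and cyclic run-types for good words, which I would verify most carefully; the aperiodicity and the Raney hypothesis check are both routine given the coprimality $\gcd(n,kn+1)=1$ and the total weight $+1$.
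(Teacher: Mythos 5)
Your argument is correct in substance, but note that the paper does not prove this statement at all: Theorem~4 is quoted from Armstrong's memoir without proof, so there is no in-paper argument to compare against line by line. The closest relative in the paper is the proof of Theorem~\ref{th-small-rule}, a Chung--Feller-style argument with circular shifts and ``flaws'' that, in the special case $\abs{\lambda}=n$ (no diagonal steps), yields exactly the count $\binom{nk}{\ell-1}(\ell-1)!/m_\lambda$ you derive. Your route via the Dvoretzky--Motzkin/Raney cycle lemma is the cleaner and more classical one for the pure Fuss--Catalan case: prepending an $N$, weighting $N\mapsto +1$ and $E\mapsto -k$ so the total is $+1$, invoking the cycle lemma, and observing that a good word begins with $N$ so that linear and cyclic east-run types agree, are all sound steps; the aperiodicity argument via $\gcd(n,kn+1)=1$ is also correct and is what guarantees each cyclic class has $(k+1)n+1$ distinct rotations. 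The paper's flaw-toggling machinery buys the extension to Schr\"oder (diagonal) steps and arbitrary numbers of flaws, which your argument does not need here.

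One step you should tighten: ``arrange the $\ell$ east-runs around a cycle in $(\ell-1)!/m_\lambda$ ways, then distribute the $N$'s into the gaps'' is not literally a valid two-stage count, since $(\ell-1)!/m_\lambda$ need not be an integer (for example $\lambda=(1,1)$ gives $1/2$). The product is nevertheless correct, and the honest justification is the one your own aperiodicity observation supplies: count \emph{linear} sequences of (run, gap) pairs, namely $\frac{\ell!}{m_\lambda}\binom{kn}{\ell-1}$, and divide by $\ell$ because the cyclic sequence of (run, gap) pairs is aperiodic (a nontrivial period of that sequence would induce a nontrivial period of the underlying length-$((k+1)n+1)$ word). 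With that repair the proof is complete.
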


   The following shows that the number of large $(k,r)$-Fuss-\Sch paths of given length and type
   is independent of $r$.
    
   \begin{lemma}
   \label{r-independent}
      Let $\mathcal{A}_{n, \lambda}^{(k,r)}$ be the set of 
      all large $(k, r)$-Fuss-\Sch paths of length~$n$ with type $\lambda$.
      Then there is a bijection between $\mathcal{A}_{n, \lambda}^{(k,i)}$ and
      $\mathcal{A}_{n, \lambda}^{(k,j)}$ for $1 \le i < j \le k$.
   \end{lemma}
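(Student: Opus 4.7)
The plan is to construct an explicit bijection $\phi \colon \mathcal{A}_{n,\lambda}^{(k,i)} \to \mathcal{A}_{n,\lambda}^{(k,j)}$ that keeps the east-step distribution of the path fixed and merely relocates each diagonal step within its band, moving it from the $i$-th vertical slot to the $j$-th vertical slot. I would first partition each path into $n$ \emph{bands}, where band $\ell$ (for $\ell = 0, 1, \dots, n-1$) consists of the portion of the path whose $y$-coordinate lies in $[k\ell, k(\ell+1)]$. Each band contains exactly $k$ vertical increments (each a north step or a diagonal step), and by condition~(C2) any diagonal in band $\ell$ must occupy the $i$-th vertical slot, i.e.\ it starts at $y = k\ell + i - 1$. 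In particular each band contains at most one diagonal.

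Given $\pi \in \mathcal{A}_{n,\lambda}^{(k,i)}$, I define $\phi(\pi)$ as follows: leave the east steps at every $y$-level untouched, and in each band containing a diagonal, turn the $i$-th vertical slot into a north step while turning the $j$-th vertical slot into a diagonal step. Because a diagonal contributes the same $+1$ to $x$ regardless of which slot it occupies, the entry point of every band is unchanged, so the sequence of east-run lengths, and therefore the type $\lambda$, is preserved. Swapping the roles of $i$ and $j$ gives the candidate inverse, so $\phi$ is a bijection once validity is established.

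The main verification is that $\phi(\pi)$ remains weakly above the line $y = kx$. Inside a diagonal band $\ell$ with entry $x$-coordinate $x_\ell$ and east-step counts $e_0, e_1, \dots, e_{k-1}$ at heights $k\ell, k\ell+1, \dots, k\ell+k-1$, the validity of $\pi$ translates into the family of inequalities
\[
x_\ell + \sum_{s=0}^{m} e_s + [m \ge i] \le \ell, \quad m = 0, 1, \dots, k-1.
\]
Since the partial sums are non-decreasing in $m$, the tightest of these is the one at $m = k-1$. One then checks that this single inequality also forces the analogous family with $i$ replaced by $j$, so $\phi(\pi)$ satisfies the required constraints and is a valid $(k,j)$-Fuss-\Sch path.

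The subtle point I expect to be the main obstacle is the boundary case where $i$ or $j$ equals $k$: when the diagonal occupies the top slot of its band, its endpoint sits on the line $y = k(\ell+1)$ rather than strictly in the band's interior, which relaxes the binding inequality from $\sum_s e_s \le \ell - x_\ell - 1$ to $\sum_s e_s \le \ell - x_\ell$. Handling this case cleanly in both the forward and inverse direction will probably require separating it out, but the essential idea, that the binding constraint in each diagonal band depends only on whether a diagonal is present rather than on which slot it occupies, is what makes the argument go through.
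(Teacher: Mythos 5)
Your construction is the same as the paper's: the paper decomposes $\pi=\mu_1 D\mu_2\omega N\mu_3$ and sets $\tau=\mu_1 N\mu_2\omega D\mu_3$, which is exactly your slot-relocation map that fixes the east steps at their heights and moves each diagonal from vertical slot $i$ to vertical slot $j$ of its band. For $i<j\le k-1$ your verification is correct and complete: at every height $k\ell+m$ with $0\le m\le k-1$ the condition $y\ge kx$ reads $x\le\ell$, the left-hand sides of your inequalities are nondecreasing in $m$, and the binding inequality $x_\ell+\sum_{s=0}^{k-1}e_s+[\,k-1\ge i\,]\le\ell$ has the same indicator value for $i$ and for $j$, so the map and its inverse both preserve validity.

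However, the boundary case $j=k$ that you flag and then defer is not a removable technicality; it is where the construction, and in fact the lemma as stated, fails. When the diagonal occupies slot $k$ its endpoint lies on the line $y=k(\ell+1)$, where the constraint is $x\le\ell+1$ rather than $x\le\ell$, so your binding inequality loses its indicator term and becomes strictly weaker. Consequently a path in $\mathcal{A}^{(k,k)}_{n,\lambda}$ whose diagonal ends on the line $y=kx$ has no preimage: lowering that diagonal to slot $i<k$ pushes the path below $y=kx$. Concretely, for $k=2$, $n=2$, $\lambda=(1)$, the path $NNEND$ (diagonal from $(1,3)$ to $(2,4)$) lies in $\mathcal{A}^{(2,2)}_{2,(1)}$, but moving its diagonal to slot $1$ yields $NNEDN$, which visits $(2,3)$ with $3<2\cdot 2$. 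Indeed $\mathcal{A}^{(2,1)}_{2,(1)}=\{NNDNE\}$ has one element while $\mathcal{A}^{(2,2)}_{2,(1)}=\{NDNNE,\,NNEND,\,NNNED,\,NNNDE\}$ has four, so no bijection exists and no amount of separating out the boundary case can rescue the argument. (The paper's own proof is the identical slot swap and has the same defect for $j=k$.) The underlying reason is that a diagonal step can end on $y=kx$ only when $r=k$, so for $r<k$ every large path is already small; the $r$-independence that is actually true, and is what Theorem~\ref{th-small-rule} needs, is the statement for \emph{small} paths, where diagonals ending on $y=kx$ are excluded and your binding-inequality analysis becomes uniform over all slots $1\le r\le k$.
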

   
   \begin{proof}
      Let $\pi$ be a path in $\mathcal{A}_{n, \lambda}^{(k,i)}$.
      If $\pi$ has a diagonal step $D$ from the level $kt + i - 1$ to the level $kt + i$, 
      then it can be decomposed into $\mu_1 D \mu_2 \omega N \mu_3$, 
      where $\omega$ is the section of east steps on level $kt + j - 1$, 
      $N$ is the north step from the level $kt + j - 1$ to $kt + j$,
      $\mu_1$ goes from $(0,0)$ to the level $kt + i - 1$,
      $\mu_2$ goes from the level $kt + i$ to the level $kt + j - 1$, and
      $\mu_3$ goes from the level $kt + j$ to $(n, kn)$.
      Let $\tau$ be the path $\mu_1 N \mu_2 \omega D \mu_3$.
      Then $\tau$ has a diagonal step from the level $kt + j - 1$ to the level $kt + j$ and 
      its type is the same as the type of $\pi$.
      By applying the similar operations to all diagonal steps in $\pi$,
      we get a path in $\mathcal{A}_{n, \lambda}^{(k,j)}$.
   \end{proof}
   
   Similarly, one can show that the number of small $(k,r)$-Fuss-\Sch paths of given
   length and type is independent of $r$.
   The following theorem provides the number of small Fuss-\Sch paths of fixed length and type.   
   
   \begin{theorem}
   \label{th-small-rule}
      The number of small $(k, r)$-Fuss-\Sch paths of type $\lambda$ ($1 \le r \le k$) is
      $$
      \binom{n-1}{\abs{\lambda} - 1}\binom{nk}{\ell - 1} \frac{(\ell - 1)!}{m_\lambda}
      = \frac{1}{nk+1} 
      \binom{n-1}{\abs{\lambda} - 1}\binom{nk + 1}{\ell} \frac{\ell !}{m_\lambda}.
      $$
      When $k = 1$, this becomes
      $$
      \binom{n-1}{\abs{\lambda} - 1}\binom{n}{\ell - 1} \frac{(\ell - 1)!}{m_\lambda}
      = \frac{1}{n+1} 
      \binom{n-1}{\abs{\lambda}-1}\binom{n+1}{\ell}\frac{\ell!}{m_\lambda}.
      $$
      When $\abs{\lambda} = n$, this becomes
      $$
      \binom{nk}{\ell - 1} \frac{(\ell - 1)!}{m_\lambda}
      = \frac{1}{nk+1} 
      \binom{nk+1}{\ell} \frac{\ell!}{m_\lambda}.
      $$
   \label{FSpath}
   \end{theorem}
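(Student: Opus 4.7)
The plan is to establish the identity in its second form,
\[
\frac{1}{nk+1} \binom{n-1}{|\lambda|-1} \binom{nk+1}{\ell} \frac{\ell!}{m_\lambda},
\]
from which the first form follows by the algebraic identity $\binom{nk+1}{\ell}\ell! = (nk+1)\binom{nk}{\ell-1}(\ell-1)!$, and the two displayed special cases are then immediate. The first step is to invoke the small-path analogue of Lemma~\ref{r-independent}, asserted in the paragraph preceding the theorem statement; this reduces the problem to a single convenient value of $r$, for which I take $r = 1$.

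The argument proceeds in Chung--Feller style. I plan to define an auxiliary set $\mathcal{S}$ of cardinality $\binom{n-1}{|\lambda|-1}\binom{nk+1}{\ell}\ell!/m_\lambda$, consisting of pairs $(c, H)$: here $c$ is a composition of $n$ into $|\lambda|$ positive parts, interpreted so that $c_i - 1$ is the number of diagonal steps inserted before the $i$-th east step in the horizontal profile (contributing $\binom{n-1}{|\lambda|-1}$ choices); and $H$ is a selection of $\ell$ distinct slots from a cyclic arrangement of $nk + 1$ ``height positions'', decorated by a composition of $\lambda$ (contributing $\binom{nk+1}{\ell} \ell!/m_\lambda$ choices). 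The group $\mathbb{Z}/(nk+1)\mathbb{Z}$ acts on $\mathcal{S}$ by cyclically translating the slots of $H$. The goal is to show, via this action, that the number of small $(k,1)$-Fuss-Schr\"oder paths of type $\lambda$ equals $|\mathcal{S}|/(nk+1)$.

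The main obstacle is the Chung--Feller step itself. For the subcase $|\lambda| = n$ (no diagonals), it collapses to the classical Dvoretzky--Motzkin cycle lemma used in Armstrong's Fuss-Catalan enumeration, and for $k = 1$ it recovers Park--Kim's formula (both useful consistency checks). When $|\lambda| < n$, the diagonal steps---permitted only when leaving levels $y \equiv 0 \pmod k$ and forbidden from touching $y = kx$ by the small condition---complicate the rotational analysis. A direct check in small cases shows that the na\"ive ``one valid rotation per orbit'' statement can fail on non-free orbits, so the correct assertion is a weighted Raney-style identity $|\mathcal{S}| = (nk+1) \cdot |\{\text{valid}\}|$, which must be established by a careful bijection between $\mathcal{S}$ and the set of pointed pairs $\{((c,H), d) : (c, H+d) \text{ realizes a valid small path}\}$. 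Producing this bijection---and in particular, exhibiting a canonical ``shift-to-valid'' map that respects the height-excess structure imposed by the above-diagonal and small conditions---is the crux of the proof.
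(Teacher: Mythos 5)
Your setup coincides with the paper's: reduce to a single $r$ via the small-path analogue of Lemma~\ref{r-independent}, build the auxiliary set of all type-$\lambda$ paths from $(0,0)$ to $(n,kn)$ with diagonal steps confined to the admissible rows (your pairs $(c,H)$ encode exactly these, and your count $\binom{n-1}{\abs{\lambda}-1}\binom{nk+1}{\ell}\ell!/m_\lambda$ matches the paper's), and then argue Chung--Feller style that exactly a $\frac{1}{nk+1}$ fraction of this set consists of small paths. The algebraic reduction between the two displayed forms and the two special cases is also fine.

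However, the proof has a genuine gap precisely where you yourself locate ``the crux'': you never produce the $(nk+1)$-to-$1$ correspondence. You correctly observe that the na\"{\i}ve cycle-lemma argument (one valid rotation per $\mathbb{Z}/(nk+1)\mathbb{Z}$-orbit) breaks down once diagonal steps are present, and you then assert that a ``weighted Raney-style identity'' must be established by ``a careful bijection'' --- but that bijection is exactly the content that needs to be supplied, and without it the theorem is not proved. The paper resolves this by abandoning the pure group action in favor of an explicit flaw statistic: a path is assigned a number of flaws in $\{0,1,\dots,nk\}$ (flawed diagonal steps below $y=kx+k$, flawed north steps below $y=kx$, plus the auxiliary north steps attached to a flawed diagonal), and the classes of size $nk+1$ are constructed directly by an invertible flaw-increment procedure (rules (1a)--(1d), with inverses (2a)--(2d)) that moves east runs leftward along the step sequence and, when an east run reaches the start of the sequence, applies a $(-1,-k)$-circular shift. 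Each class contains exactly one $0$-flaw path, which is the small path, and one path with $j$ flaws for each $1\le j\le nk$. Your proposal would become a proof only if you exhibit such a shift-to-valid map (or an equivalent flaw chain) and verify it is well defined and invertible; as written, the central combinatorial claim is asserted rather than demonstrated.
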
   
   
   Note that 
   the number of small $(k, r)$-Fuss-\Sch paths of the case $k = 1$ 
   is the number of small \Sch paths    
   since paths staying above the line $y=x$ are considered.
   In the case of $\abs{\lambda} = n$, 
   the number of small $(k, r)$-Fuss-\Sch paths is the number of $k$-Fuss-Catalan paths
   since we count paths without using diagonal steps only.
    
   \begin{proof}[Proof of Theorem~\ref{FSpath}]
      The number of small $(k, r)$-Fuss-\Sch paths 
      of type~$\lambda$ ($1 \le r \le k$) is independent of $r$ (by Lemma~\ref{r-independent}), and 
      $r=k$ is assumed in this proof.      
      
      To show the number of small $(k, k)$-Fuss-\Sch paths 
      of type $\lambda = \lambda_1 \lambda_2 \ldots \lambda_\ell$ is
      $\frac{1}{nk+1} 
      \binom{n-1}{\abs{\lambda} - 1}\binom{nk + 1}{\ell} \frac{\ell!}{m_\lambda}$,
      we first consider all the paths from $(0,0)$ to $(n,kn)$ of type $\lambda$
      using east steps, north steps, and diagonal steps such that 
      the diagonal steps are only allowed to go in the $ik$th rows for $2 \leq i \leq n$.
      Being such a path,  
      $n-\abs{\lambda}$ rows are chosen from $2k, 3k, \ldots, nk$th
      rows for $n-\abs{\lambda}$ diagonal steps, and
      $\ell$ lines are needed from $nk+1$ horizontal lines 
      for $\ell$ east runs (i.e. maximal consecutive east steps).
      Since the $\ell$ east runs are ordered in $\frac{\ell!}{m_\lambda}$ different ways,
      there are 
      total $\binom{n-1}{\abs{\lambda} - 1}\binom{nk + 1}{\ell} \frac{\ell!}{m_\lambda}$ paths
      satisfying the above conditions. 
      
      We say a diagonal step of the paths has a flaw if it is located below the line $y=kx+k$, 
      and a north step has a flaw if it is below the line $y=kx$.
      North steps in $(ik-1)$th, $(ik-2)$th, $\ldots$, $(ik-k+1)$th rows 
      between $y=kx$ and $y=kx+k$ are also considered as flawed steps
      if a flawed diagonal step is contained in the $ik$th row. 
      Note that each path has $j$ flaws for some $j \in [1, kn]$
      if and only if it is not a small $(k, k)$-Fuss-\Sch path.
      The following rules identify $nk$ flawed paths 
      corresponding to a small $(k, k)$-Fuss-\Sch path 
      by increasing the number of flaws from the small $(k, k)$-Fuss-\Sch path one by one,
      and vice versa.
            
      \begin{enumerate}
         \item
         Increasing one flaw:    
         \begin{enumerate}
            \item[(1a)]
            See a given path as a sequence on $\{E, N, D\}^{(k+1)n}$, and  
            take the leftmost east run which is located
            right before D, NE, any flawed step, or nothing 
            (i.e. the east run is the last steps). 
            Move the east run to the left on the sequence 
            until the path get exactly one more flaw
            on condition that the east run does not pass D, NE, or any flawed step 
            without an increase on the number of flaws.
            Example~\ref{eg-small-rule-1} is given for this basic case.
            \item[(1b)]
            If the leftmost east run passes D, NE, or any flawed step
            without an increase on the number of flaws as seen in Example~\ref{eg-small-rule-2}, 
            stop the moving right after passing the step, 
            and do (1a) for the following east runs again.  
            \item[(1c)]
            In the case that 
            the leftmost east run in (1a) contains the first east step of the sequence,
            apply $(-1,-k)$-circular shifts 
            (i.e. shifting each step of the path 
            along a vector $(-1~\text{mod}~n, -k~\text{mod}~kn)$ to get another path) 
            repeatedly until obtaining a new path from $(0,0)$ to $(n, kn)$ such that 
            the first step is not an east step and the $k$th row doesn't have a diagonal step.
            If there is no such path, the original path already has $kn$ flaws and we may stop.
            See Example~\ref{eg-small-rule-3}.
            To keep type $\lambda$ for the new path, 
            we consider two east runs of the original path are separately
            even if they are connected after circular shifts.              
            \item[(1d)]
            If a new path is obtained in (1c),
            take the leftmost one (say the $s$th east run) and 
            the the rightmost one (say the $(s+t)$th east run) among east runs located
            right before D, NE, any flawed step, or on the same line with another east run.  
            Move the $(s+t-u)$th east run
            to the position of the $(s+t-u-1)$th east run ($0 \leq u \leq t-1$)
            and, after that, apply (1a) to the $s$th east run.
            There are two examples for the circular shift in Example~\ref{eg-small-rule-4}.
         \end{enumerate}
         \item
         Decreasing one flaw: 
         \begin{enumerate}
            \item[(2a)]
            Note that there is always an east run 
            right before the leftmost flawed step on a sequence.
            Move the east run to the right 
            until it meets D, E, NE, or the second leftmost flawed step if those steps exist.
            Otherwise, the east run becomes the last steps of the sequence.
            Look at the second part in Example~\ref{eg-small-rule-1}.
            \item[(2b)]
            If another east run is located right before flawless D or flawless NE
            which is prior the east run used in (2a) 
            as Figure~\ref{small-rule-2}(b) in Example~\ref{eg-small-rule-2},
            move the another one to the right
            until meeting with next D, NE or a flawed step.
            \item[(2c)]
            If two east runs share the same horizontal line
            (i.e. east lines are back to back on a sequence) 
            while any east run rearrangement is done,
            move the right east run to the right
            until it meets a next east run or a horizontal line $y=(w-1)k$
            where $wk$th row is the highest row containing no diagonal step
            among $ik$th rows ($2 \leq i \leq n$)
            as Example~\ref{eg-small-rule-4}. 
            \item[(2d)]
            If (2c) has ever been applied 
            like both examples in Example~\ref{eg-small-rule-4}, 
            then we need a $(-w+1~\text{mod}~n, (-w+1)k~\text{mode}~kn)$-circular shift.
            Otherwise, we may stop.
         \end{enumerate}      
      \end{enumerate}      
      Therefore, there are 
      $\frac{1}{nk+1} \binom{n-1}{\abs{\lambda} - 1}\binom{nk + 1}{\ell} \frac{\ell!}{m_\lambda}$
      small $(k, k)$-Fuss-\Sch paths of type $\lambda$.      
   \end{proof}

   In the next 4 examples, 
   we consider paths from $(0,0)$ to $(n,kn)$ of type $\lambda$
   using east steps, north steps, and diagonal steps such that 
   the diagonal steps are only allowed to go in the $ik$th rows for $2 \leq i \leq n$
   where $n=4$, $k=2$, and $\lambda = (2,1)$.   
   
   \begin{example}
   \label{eg-small-rule-1}
      The $2$-east run in Figure~\ref{small-rule-1}(a) is the leftmost one 
      among east runs located right before D, NE, any flaw step, or nothing.
      Move the $2$-east run to the left one unit on the sequence
      so that the path get one more flaw, a diagonal step below the line $y=2x+2$, 
      as in Figure~\ref{small-rule-1}(b). 
      
      In the reverse, 
      the leftmost (and unique) flawed step is a diagonal step, and 
      there is a $2$-east run right before the flawed step
      in Figure~\ref{small-rule-1}(b).
      Since there is no D, E, NE, or the second leftmost flawed step after the $2$-east run,
      move the $2$-east run to the right until it becomes the last steps of the sequence
      as in Figure~\ref{small-rule-1}(a).   
         
   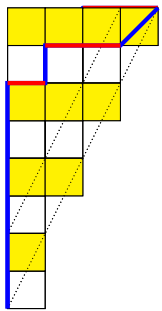
\begin{figure}[h]
   \begin{center}
   \begin{tikzpicture}[scale=0.5]
      \hspace{-2cm}
      \draw (0,11) rectangle (1,12);
      \filldraw[fill=yellow] (0,12) rectangle (1,13);
      \draw (0,13) rectangle (1,14);
      \filldraw[fill=yellow] (0,14) rectangle (1,15);
      \draw (0,15) rectangle (1,16);
      \filldraw[fill=yellow] (0,16) rectangle (1,17);
      \draw (0,17) rectangle (1,18);
      \filldraw[fill=yellow] (0,18) rectangle (1,19);
      \filldraw[fill=yellow] (1,14) rectangle (2,15);
      \draw (1,15) rectangle (2,16);
      \filldraw[fill=yellow] (1,16) rectangle (2,17);
      \draw (1,17) rectangle (2,18);
      \filldraw[fill=yellow] (1,18) rectangle (2,19);
      \filldraw[fill=yellow] (2,16) rectangle (3,17);
      \draw (2,17) rectangle (3,18);
      \filldraw[fill=yellow] (2,18) rectangle (3,19);
      \filldraw[fill=yellow] (3,18) rectangle (4,19);
      \draw[densely dotted] (0,13)--(3,19);
      \draw[densely dotted] (0,11)--(4,19);
      \draw[ultra thick,blue] (0,11)--(0,17)--(1,17)--(1,18)--(2,19)--(4,19);
      \draw[ultra thick,red] (0,17)--(1,17);
      \draw[ultra thick,red] (2,19)--(4,19);
      \draw (1,9) node{(a) No flaws};
      
      \hspace{5cm}
      \draw (0,11) rectangle (1,12);
      \filldraw[fill=yellow] (0,12) rectangle (1,13);
      \draw (0,13) rectangle (1,14);
      \filldraw[fill=yellow] (0,14) rectangle (1,15);
      \draw (0,15) rectangle (1,16);
      \filldraw[fill=yellow] (0,16) rectangle (1,17);
      \draw (0,17) rectangle (1,18);
      \filldraw[fill=yellow] (0,18) rectangle (1,19);
      \filldraw[fill=yellow] (1,14) rectangle (2,15);
      \draw (1,15) rectangle (2,16);
      \filldraw[fill=yellow] (1,16) rectangle (2,17);
      \draw (1,17) rectangle (2,18);
      \filldraw[fill=yellow] (1,18) rectangle (2,19);
      \filldraw[fill=yellow] (2,16) rectangle (3,17);
      \draw (2,17) rectangle (3,18);
      \filldraw[fill=yellow] (2,18) rectangle (3,19);
      \filldraw[fill=yellow] (3,18) rectangle (4,19);
      \draw[densely dotted] (0,13)--(3,19);
      \draw[densely dotted] (0,11)--(4,19);
      \draw[ultra thick,blue] (0,11)--(0,17)--(1,17)--(1,18)--(3,18)--(4,19);
      \draw[ultra thick,red] (0,17)--(1,17);
      \draw[ultra thick,red] (1,18)--(3,18);
      \draw (1,9) node{(b) 1 flaw};
   \end{tikzpicture}
   \end{center}
   \vspace{-0.2cm}
   \caption{
   Rules for adding and subtracting flaws
   in Theorem~\ref{th-small-rule}(1a) and \ref{th-small-rule}(2a).}
   \label{small-rule-1}
   \end{figure} 
   \end{example}
   
   \begin{example}
   \label{eg-small-rule-2}
      The $1$-east run in Figure~\ref{small-rule-2}(a) is the leftmost one 
      among east runs right before D, NE, any flaw step, or nothing.
      Moving the $1$-east run to the left, it passes D without increase on the number of flaws.
      Hence, stop the moving right after passing the diagonal step, and 
      move the second leftmost one, the $2$-east run, to the left one unit on the sequence
      so that the path get one more flaw, a north step below the line $y=2x$, 
      as in Figure~\ref{small-rule-2}(b). 
   
      Inversely,
      the leftmost flawed step is the highest north step in Figure~\ref{small-rule-2}(b), and 
      move a $2$-east run right before the flawed step
      until it becomes the last steps of the sequence.
      After that, 
      move the $1$-east run, right before flawless D prior the $2$-east run,
      to the right $2$ units until meeting with NE
      as in Figure~\ref{small-rule-2}(a).
         
   \begin{figure}[h]
   \begin{center}
   \begin{tikzpicture}[scale=0.5]
      \hspace{-2cm}
      \draw (0,11) rectangle (1,12);
      \filldraw[fill=yellow] (0,12) rectangle (1,13);
      \draw (0,13) rectangle (1,14);
      \filldraw[fill=yellow] (0,14) rectangle (1,15);
      \draw (0,15) rectangle (1,16);
      \filldraw[fill=yellow] (0,16) rectangle (1,17);
      \draw (0,17) rectangle (1,18);
      \filldraw[fill=yellow] (0,18) rectangle (1,19);
      \filldraw[fill=yellow] (1,14) rectangle (2,15);
      \draw (1,15) rectangle (2,16);
      \filldraw[fill=yellow] (1,16) rectangle (2,17);
      \draw (1,17) rectangle (2,18);
      \filldraw[fill=yellow] (1,18) rectangle (2,19);
      \filldraw[fill=yellow] (2,16) rectangle (3,17);
      \draw (2,17) rectangle (3,18);
      \filldraw[fill=yellow] (2,18) rectangle (3,19);
      \filldraw[fill=yellow] (3,18) rectangle (4,19);      
      \draw[densely dotted] (0,13)--(3,19);
      \draw[densely dotted] (0,11)--(4,19);
      \draw[ultra thick,blue] (0,11)--(0,16)--(1,17)--(1,18)--(2,18)--(2,19)--(4,19);
      \draw[ultra thick,red] (1,18)--(2,18);
      \draw[ultra thick,red] (2,19)--(4,19);
      \draw (1,9) node{(a) No flaws};
      
      \hspace{5cm}
      \draw (0,11) rectangle (1,12);
      \filldraw[fill=yellow] (0,12) rectangle (1,13);
      \draw (0,13) rectangle (1,14);
      \filldraw[fill=yellow] (0,14) rectangle (1,15);
      \draw (0,15) rectangle (1,16);
      \filldraw[fill=yellow] (0,16) rectangle (1,17);
      \draw (0,17) rectangle (1,18);
      \filldraw[fill=yellow] (0,18) rectangle (1,19);
      \filldraw[fill=yellow] (1,14) rectangle (2,15);
      \draw (1,15) rectangle (2,16);
      \filldraw[fill=yellow] (1,16) rectangle (2,17);
      \draw (1,17) rectangle (2,18);
      \filldraw[fill=yellow] (1,18) rectangle (2,19);
      \filldraw[fill=yellow] (2,16) rectangle (3,17);
      \draw (2,17) rectangle (3,18);
      \filldraw[fill=yellow] (2,18) rectangle (3,19);
      \filldraw[fill=yellow] (3,18) rectangle (4,19);
      \draw[densely dotted] (0,13)--(3,19);
      \draw[densely dotted] (0,11)--(4,19);
      \draw[ultra thick,blue] (0,11)--(0,16)--(1,16)--(2,17)--(2,18)--(4,18)--(4,19);
      \draw[ultra thick,red] (0,16)--(1,16);
      \draw[ultra thick,red] (2,18)--(4,18);
      \draw (1,9) node{(b) 1 flaw};   
   \end{tikzpicture}
   \end{center}
   \vspace{-0.2cm}
   \caption{
   Rules for adding and subtracting flaws
   in Theorem~\ref{th-small-rule}(1b) and \ref{th-small-rule}(2b).}
   \label{small-rule-2}
   \end{figure}
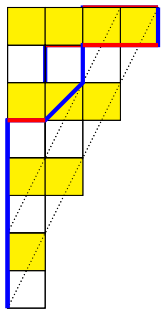 
   \end{example}
   
   \begin{example}
   \label{eg-small-rule-3}
      If $(-1,-2)$-circular shift is applied to the path in Figure~\ref{small-rule-3} only once,
      we get a path containing a diagonal step  on the $2$nd row. 
      If $(-1,-2)$-circular shift is applied twice,
      a path having $2$-east run as first steps is obtained. 
      Three $(-1,-2)$-circular shifts generate 
      a disconnected path not from $(0,0)$ to $(4, 8)$.
      Hence, one more flaw cannot be added, and
      it is natural since 
      the path in Figure~\ref{small-rule-3} is already containing $8$ flaws fully.
   
   \begin{figure}[h]
   \begin{center}
   \begin{tikzpicture}[scale=0.5]
      \hspace{0cm}
      \draw (0,11) rectangle (1,12);
      \filldraw[fill=yellow] (0,12) rectangle (1,13);
      \draw (0,13) rectangle (1,14);
      \filldraw[fill=yellow] (0,14) rectangle (1,15);
      \draw (0,15) rectangle (1,16);
      \filldraw[fill=yellow] (0,16) rectangle (1,17);
      \draw (0,17) rectangle (1,18);
      \filldraw[fill=yellow] (0,18) rectangle (1,19);
      \filldraw[fill=yellow] (1,14) rectangle (2,15);
      \draw (1,15) rectangle (2,16);
      \filldraw[fill=yellow] (1,16) rectangle (2,17);
      \draw (1,17) rectangle (2,18);
      \filldraw[fill=yellow] (1,18) rectangle (2,19);
      \filldraw[fill=yellow] (2,16) rectangle (3,17);
      \draw (2,17) rectangle (3,18);
      \filldraw[fill=yellow] (2,18) rectangle (3,19);
      \filldraw[fill=yellow] (3,18) rectangle (4,19);
      \draw[densely dotted] (0,13)--(3,19);
      \draw[densely dotted] (0,11)--(4,19);
      \draw[ultra thick,blue] (0,11)--(1,11)--(1,14)--(2,15)--(4,15)--(4,19);
      \draw[ultra thick,red] (0,11)--(1,11);
      \draw[ultra thick,red] (2,15)--(4,15);
      \draw (1,9) node{8 flaws};
   \end{tikzpicture}
   \end{center}
   \vspace{-0.2cm}
   \caption{
   Rules for adding and subtracting flaws
   in Theorem~\ref{th-small-rule}(1c).}
   \label{small-rule-3}
   \end{figure} 
   \end{example}
   
   \begin{example}
   \label{eg-small-rule-4}   
      \hspace{0cm}
      \begin{enumerate}      
         \item
         If $(-1,-2)$-circular shift is applied 
         to the path in Figure~\ref{small-rule-4}(a) twice,
         we obtain a new path from $(0,0)$ to $(4, 8)$ such that 
         the first step is not an east step and the $2$nd row doesn't have a diagonal step
         as in Figure~\ref{small-rule-4}(b).
         To keep type $\lambda$ for the new path,
         $3$ east steps on the same horizontal line after circular shifts are considered as 
         two east runs, $2$-east run and $1$-east run in order, separately.
         Now in Figure~\ref{small-rule-4}(b), 
         take the leftmost one ($2$-east run) and 
         the rightmost one ($1$-east run) among east runs 
         right before D, NE, any flawed step, or on the same line with another east run.  
         The $1$-east run is already 
         in the (next) position of the $2$-east run on the sequence, and
         the $2$-east run is moved to the left one unit 
         so that a path gets one more flawed north step 
         as shown in Figure~\ref{small-rule-4}(c).
   
         To the opposite direction,
         the $2$-east run right before the leftmost flawed step 
         in Figure~\ref{small-rule-4}(c)
         is moved to the right one unit on the sequence and meets another east run
         as in Figure~\ref{small-rule-4}(b). 
         Since two east runs share the same horizontal line, and
         $6$th ($w=3$) row is the highest row containing no diagonal step 
         among $ik$th rows ($2 \leq i \leq n$), and
         the right east run ($1$-east run) is already on the horizontal line $y=4$,
         the only necessary thing to get a path in Figure~\ref{small-rule-4}(a) is 
         a $(2, 4)$-circular shift.
      
   \begin{figure}[h]
   \begin{center}
   \begin{tikzpicture}[scale=0.5]
      \hspace{-5cm}
      \draw (0,-1) rectangle (1,0);
      \filldraw[fill=yellow] (0,0) rectangle (1,1);
      \draw (0,1) rectangle (1,2);
      \filldraw[fill=yellow] (0,2) rectangle (1,3);
      \draw (0,3) rectangle (1,4);
      \filldraw[fill=yellow] (0,4) rectangle (1,5);
      \draw (0,5) rectangle (1,6);
      \filldraw[fill=yellow] (0,6) rectangle (1,7);
      \filldraw[fill=yellow] (1,2) rectangle (2,3);
      \draw (1,3) rectangle (2,4);
      \filldraw[fill=yellow] (1,4) rectangle (2,5);
      \draw (1,5) rectangle (2,6);
      \filldraw[fill=yellow] (1,6) rectangle (2,7);
      \filldraw[fill=yellow] (2,4) rectangle (3,5);
      \draw (2,5) rectangle (3,6);
      \filldraw[fill=yellow] (2,6) rectangle (3,7);
      \filldraw[fill=yellow] (3,6) rectangle (4,7);
      \draw[densely dotted] (0,1)--(3,7);
      \draw[densely dotted] (0,-1)--(4,7);
      \draw[ultra thick,blue] (0,-1)--(1,-1)--(1,2)--(2,3)--(2,7)--(4,7);
      \draw[ultra thick,red] (0,-1)--(1,-1);
      \draw[ultra thick,red] (2,7)--(4,7);
      \draw (1,-3) node{(a) 4 flaws};

      \hspace{5cm}
      \draw (0,-1) rectangle (1,0);
      \filldraw[fill=yellow] (0,0) rectangle (1,1);
      \draw (0,1) rectangle (1,2);
      \filldraw[fill=yellow] (0,2) rectangle (1,3);
      \draw (0,3) rectangle (1,4);
      \filldraw[fill=yellow] (0,4) rectangle (1,5);
      \draw (0,5) rectangle (1,6);
      \filldraw[fill=yellow] (0,6) rectangle (1,7);
      \filldraw[fill=yellow] (1,2) rectangle (2,3);
      \draw (1,3) rectangle (2,4);
      \filldraw[fill=yellow] (1,4) rectangle (2,5);
      \draw (1,5) rectangle (2,6);
      \filldraw[fill=yellow] (1,6) rectangle (2,7);
      \filldraw[fill=yellow] (2,4) rectangle (3,5);
      \draw (2,5) rectangle (3,6);
      \filldraw[fill=yellow] (2,6) rectangle (3,7);
      \filldraw[fill=yellow] (3,6) rectangle (4,7);
      \draw[densely dotted] (0,1)--(3,7);
      \draw[densely dotted] (0,-1)--(4,7);
      \draw[ultra thick,blue] (0,-1)--(0,3)--(2,3)--(2,3)--(3,3)--(3,6)--(4,7);
      \draw[ultra thick,red] (0,3)--(2,3);
      \draw[ultra thick,red] (2,3)--(3,3);
      \draw (1,-3) node{(b) circular shifting};
      
      \hspace{5cm}
      \draw (0,-1) rectangle (1,0);
      \filldraw[fill=yellow] (0,0) rectangle (1,1);
      \draw (0,1) rectangle (1,2);
      \filldraw[fill=yellow] (0,2) rectangle (1,3);
      \draw (0,3) rectangle (1,4);
      \filldraw[fill=yellow] (0,4) rectangle (1,5);
      \draw (0,5) rectangle (1,6);
      \filldraw[fill=yellow] (0,6) rectangle (1,7);
      \filldraw[fill=yellow] (1,2) rectangle (2,3);
      \draw (1,3) rectangle (2,4);
      \filldraw[fill=yellow] (1,4) rectangle (2,5);
      \draw (1,5) rectangle (2,6);
      \filldraw[fill=yellow] (1,6) rectangle (2,7);
      \filldraw[fill=yellow] (2,4) rectangle (3,5);
      \draw (2,5) rectangle (3,6);
      \filldraw[fill=yellow] (2,6) rectangle (3,7);
      \filldraw[fill=yellow] (3,6) rectangle (4,7);
      \draw[densely dotted] (0,1)--(3,7);
      \draw[densely dotted] (0,-1)--(4,7);
      \draw[ultra thick,blue] (0,-1)--(0,2)--(2,2)--(2,3)--(3,3)--(3,6)--(4,7);
      \draw[ultra thick,red] (0,2)--(2,2);
      \draw[ultra thick,red] (2,3)--(3,3);
      \draw (1,-3) node{(c) 5 flaws};  
   \end{tikzpicture}
   \end{center}
   \vspace{-0.2cm}
   \caption{
   Rules for adding and subtracting flaws
   in Theorem~\ref{th-small-rule}(1c) and \ref{th-small-rule}(2c).}
   \label{small-rule-4}
   \end{figure}
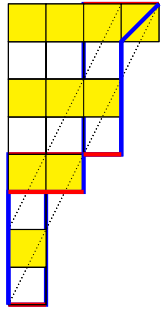        
      
         \item
         Similar to the previous case,
         $(-1,-2)$-circular shift is applied to the path in Figure~\ref{small-rule-5}(a) twice.
         Take the leftmost one ($2$-east run) and the rightmost one ($1$-east run) 
         of the newly obtained path in Figure~\ref{small-rule-5}(b), and
         move the $1$-east run to the position of the $2$-east run on the sequence.
         As the last step, 
         the $2$-east run is moved to the left one unit to get one more flawed north step 
         as shown in Figure~\ref{small-rule-5}(c).
   
         Conversely,
         the $2$-east run right before the leftmost flawed step 
         in Figure~\ref{small-rule-5}(c)
         is moved to the right one unit on the sequence and meets another east run.
         Then, move the right east run ($1$-east run) 
         between two east runs sharing the same horizontal line
         to the right until it meets a horizontal line $y=4$
         as in Figure~\ref{small-rule-5}(b). 
         Lastly, a $(2, 4)$-circular shift is applied to get a path 
         in Figure~\ref{small-rule-5}(a).
      \end{enumerate}  
            
   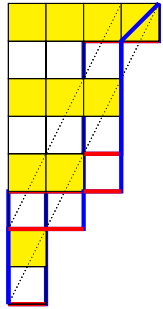
\begin{figure}[h]
   \begin{center}
   \begin{tikzpicture}[scale=0.5]
      \hspace{-5cm}
      \draw (0,-1) rectangle (1,0);
      \filldraw[fill=yellow] (0,0) rectangle (1,1);
      \draw (0,1) rectangle (1,2);
      \filldraw[fill=yellow] (0,2) rectangle (1,3);
      \draw (0,3) rectangle (1,4);
      \filldraw[fill=yellow] (0,4) rectangle (1,5);
      \draw (0,5) rectangle (1,6);
      \filldraw[fill=yellow] (0,6) rectangle (1,7);
      \filldraw[fill=yellow] (1,2) rectangle (2,3);
      \draw (1,3) rectangle (2,4);
      \filldraw[fill=yellow] (1,4) rectangle (2,5);
      \draw (1,5) rectangle (2,6);
      \filldraw[fill=yellow] (1,6) rectangle (2,7);
      \filldraw[fill=yellow] (2,4) rectangle (3,5);
      \draw (2,5) rectangle (3,6);
      \filldraw[fill=yellow] (2,6) rectangle (3,7);
      \filldraw[fill=yellow] (3,6) rectangle (4,7);
      \draw[densely dotted] (0,1)--(3,7);
      \draw[densely dotted] (0,-1)--(4,7);
      \draw[ultra thick,blue] (0,-1)--(1,-1)--(1,2)--(2,3)--(2,6)--(4,6)--(4,7);
      \draw[ultra thick,red] (0,-1)--(1,-1);
      \draw[ultra thick,red] (2,6)--(4,6);
      \draw (1,-3) node{(a) 5 flaws};
      
      \hspace{5cm}
      \draw (0,-1) rectangle (1,0);
      \filldraw[fill=yellow] (0,0) rectangle (1,1);
      \draw (0,1) rectangle (1,2);
      \filldraw[fill=yellow] (0,2) rectangle (1,3);
      \draw (0,3) rectangle (1,4);
      \filldraw[fill=yellow] (0,4) rectangle (1,5);
      \draw (0,5) rectangle (1,6);
      \filldraw[fill=yellow] (0,6) rectangle (1,7);
      \filldraw[fill=yellow] (1,2) rectangle (2,3);
      \draw (1,3) rectangle (2,4);
      \filldraw[fill=yellow] (1,4) rectangle (2,5);
      \draw (1,5) rectangle (2,6);
      \filldraw[fill=yellow] (1,6) rectangle (2,7);
      \filldraw[fill=yellow] (2,4) rectangle (3,5);
      \draw (2,5) rectangle (3,6);
      \filldraw[fill=yellow] (2,6) rectangle (3,7);
      \filldraw[fill=yellow] (3,6) rectangle (4,7);
      \draw[densely dotted] (0,1)--(3,7);
      \draw[densely dotted] (0,-1)--(4,7);
      \draw[ultra thick,blue] (0,-1)--(0,2)--(2,2)--(2,3)--(3,3)--(3,6)--(4,7);
      \draw[ultra thick,red] (0,2)--(2,2);
      \draw[ultra thick,red] (2,3)--(3,3);
      \draw (1,-3) node{(b) circular shifting};
      
      \hspace{5cm}
      \draw (0,-1) rectangle (1,0);
      \filldraw[fill=yellow] (0,0) rectangle (1,1);
      \draw (0,1) rectangle (1,2);
      \filldraw[fill=yellow] (0,2) rectangle (1,3);
      \draw (0,3) rectangle (1,4);
      \filldraw[fill=yellow] (0,4) rectangle (1,5);
      \draw (0,5) rectangle (1,6);
      \filldraw[fill=yellow] (0,6) rectangle (1,7);
      \filldraw[fill=yellow] (1,2) rectangle (2,3);
      \draw (1,3) rectangle (2,4);
      \filldraw[fill=yellow] (1,4) rectangle (2,5);
      \draw (1,5) rectangle (2,6);
      \filldraw[fill=yellow] (1,6) rectangle (2,7);
      \filldraw[fill=yellow] (2,4) rectangle (3,5);
      \draw (2,5) rectangle (3,6);
      \filldraw[fill=yellow] (2,6) rectangle (3,7);
      \filldraw[fill=yellow] (3,6) rectangle (4,7);
      \draw[densely dotted] (0,1)--(3,7);
      \draw[densely dotted] (0,-1)--(4,7);
      \draw[ultra thick,blue] (0,-1)--(0,1)--(2,1)--(2,2)--(3,2)--(3,6)--(4,7);
      \draw[ultra thick,red] (0,1)--(2,1);
      \draw[ultra thick,red] (2,2)--(3,2);
      \draw (1,-3) node{(c) 6 flaws};                         
   \end{tikzpicture}
   \end{center}
   \vspace{-0.2cm}
   \caption{
   Rules for adding and subtracting flaws
   in Theorem~\ref{th-small-rule}(1d) and \ref{th-small-rule}(2d).}
   \label{small-rule-5}
   \end{figure}  
   \end{example}  
   
\section{Fuss-\Sch paths and noncrossing partitions}

   In this section, 
   to extend the results of small $(k, r)$-Fuss-Schröder paths with type $\lambda$
   to large $(k, r)$-Fuss-Schröder paths with type $\lambda$,
   we introduce sparse noncrossing partitions
   which are in bijection with the set of $(k, r)$-Fuss-Schröder paths of type $\lambda$.

   A \emph{noncrossing partition} of $[n] := \{1, 2, \ldots, n\}$ is 
   a pairwise disjoint subsets $B_1, B_2. \ldots, B_l$ of $[n]$ whose union is $[n]$ in which, 
   if $a$ and $b$ belong to one block~$B_i$ and $x$ and $y$ to another block $B_j$,
   they are not arranged in the order $a x b y$.
   Note that, if the elements $1, 2, \dots, n$ are equally-spaced dots on a horizontal line, and
   all the successive elements of the same bock are connected by arc above the line,  
   then no arches cross each other for a noncrossing partition of $[n]$.
   A noncrossing partition is called \emph{sparse} 
   if no two consecutive integers are in the same block.
   We give an order to the blocks by the order of the smallest element of each block.
   \emph{Connected components} of a noncrossing partition of $[n]$ are 
   $\{1, 2, \ldots, i_1\}, \{i_1+1, i_1+2, \ldots, i_2\}, \ldots, \{i_t+1, i_t+2, \ldots, n\}$
   where $i_1$ is the greatest element of the block containing $1$, 
   $i_2$ is the greatest element of the block containing $i_1+1$, and so on. 
   The \emph{arc type} of a noncrossing partition is the integer partition
   obtained from the numbers of connected arcs.
   
   In Figure~\ref{ncpartition}, 
   a noncrossing partition of $[10]$ is written by $4$ ordered blocks. 
   Two connected components are $\{1, 2, \ldots, 8\}$ and $\{9, 10\}$, and 
   the arc type of the given noncrossing partition is $(3,2,1)$.
   Since $2$ and $3$ are in the same block,
   this partition is not a sparse noncrossing partition.
   
   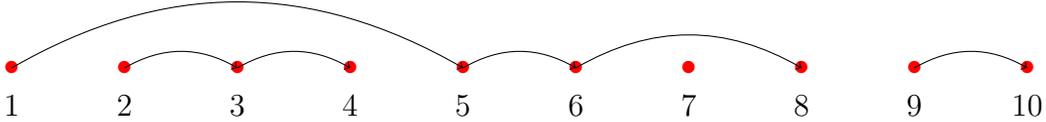
\begin{figure}[h]
   \begin{center}
   \begin{tikzpicture}[scale=0.5]
      \hspace{0cm}
      \draw[red] (0,1) node{\textbullet};
      \draw[red] (3,1) node{\textbullet};
      \draw[red] (6,1) node{\textbullet};
      \draw[red] (9,1) node{\textbullet};
      \draw[red] (12,1) node{\textbullet};
      \draw[red] (15,1) node{\textbullet};
      \draw[red] (18,1) node{\textbullet};
      \draw[red] (21,1) node{\textbullet};
      \draw[red] (24,1) node{\textbullet};
      \draw[red] (27,1) node{\textbullet};
      \draw (0,0) node{$1$};
      \draw (3,0) node{$2$};
      \draw (6,0) node{$3$};
      \draw (9,0) node{$4$};
      \draw (12,0) node{$5$};
      \draw (15,0) node{$6$};
      \draw (18,0) node{$7$};
      \draw (21,0) node{$8$};
      \draw (24,0) node{$9$};
      \draw (27,0) node{$10$};       
      \draw[->] (0,1) to[bend left] (12,1);
      \draw[->] (12,1) to[bend left] (15,1);
      \draw[->] (15,1) to[bend left] (21,1);
      \draw[->] (3,1) to[bend left] (6,1);
      \draw[->] (6,1) to[bend left] (9,1); 
      \draw[->] (24,1) to[bend left] (27,1);  
   \end{tikzpicture}
   \end{center}
   \vspace{-0.2cm}
   \caption{Noncrossing partition $(\{1,5,6,8\}$, $\{2,3,4\}$, $\{7\}$, $\{9,10\})$ of $[10]$.}
   \label{ncpartition}
   \end{figure}

   Before introducing a special noncrossing partition,
   we need more notations and labellings concerning $(k, k)$-Fuss-\Sch paths. 
   On a skew shape from $(0,0)$ to $(n,kn)$, 
   the $(n-i)k$th row is labeled by $i(k+1)+2$ for $i \geq 0$, and
   the horizontal line $y=(n-i)k+j$ is labeled by $i(k+1)+1-j$ if $0 \leq j < k$.   
   See Figure~\ref{labels} for example.
   
   \begin{figure}[h]
   \begin{center}
   \begin{tikzpicture}[scale=0.5]
      \hspace{0cm}      
      \draw (0,-1) rectangle (1,0);
      \filldraw[fill=yellow] (0,0) rectangle (1,1);
      \draw (0,1) rectangle (1,2);
      \filldraw[fill=yellow] (0,2) rectangle (1,3);
      \draw (0,3) rectangle (1,4);
      \filldraw[fill=yellow] (0,4) rectangle (1,5);
      \draw (0,5) rectangle (1,6);
      \filldraw[fill=yellow] (0,6) rectangle (1,7);
      \filldraw[fill=yellow] (1,2) rectangle (2,3);
      \draw (1,3) rectangle (2,4);
      \filldraw[fill=yellow] (1,4) rectangle (2,5);
      \draw (1,5) rectangle (2,6);
      \filldraw[fill=yellow] (1,6) rectangle (2,7);
      \filldraw[fill=yellow] (2,4) rectangle (3,5);
      \draw (2,5) rectangle (3,6);
      \filldraw[fill=yellow] (2,6) rectangle (3,7);
      \filldraw[fill=yellow] (3,6) rectangle (4,7);
      \draw[densely dotted] (0,1)--(3,7);
      \draw[densely dotted] (0,-1)--(4,7);  
      \draw[red] (0,-1) node{\textbullet};  
      \draw (-1.2,-1.2) node{$(0,0)$};
      \hspace{0.2cm}
      \draw (2,-1) node{$\longleftarrow 13$};
      \draw (-1.8,0.5) node{$11 \longrightarrow$};
      \draw (2,0) node{$\longleftarrow 12$};
      \draw (2,1) node{$\longleftarrow 10$};
      \draw (-1.8,2.5) node{$~8 \longrightarrow$};
      \draw (3,2) node{$\longleftarrow 9~$};
      \draw (3,3) node{$\longleftarrow 	7~$};
      \draw (-1.8,4.5) node{$~5 \longrightarrow$};
      \draw (4,4) node{$\longleftarrow 6~$};
      \draw (4,5) node{$\longleftarrow 4~$};
      \draw (-1.8,6.5) node{$~2 \longrightarrow$};
      \draw (5,6) node{$\longleftarrow 3~$};
      \draw (5,7) node{$\longleftarrow 1~$};
      \draw[ultra thick,blue] (-0.35,-1)--(-0.35,5)--(0.65,5)--(0.65,6)--(1.65,7)--(3.65,7);
   \end{tikzpicture}
   \end{center}
   \vspace{-0.2cm}
   \caption{Labels when $n=4$ and $k=2$.}
   \label{labels}
   \end{figure}
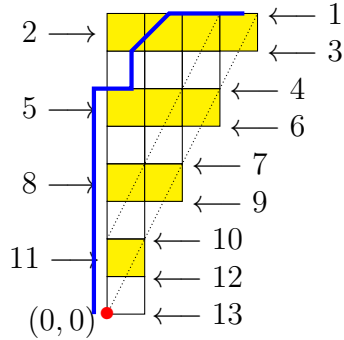 
   
   We can represent a $(k, k)$-Fuss-\Sch path of length $n$ 
   as a sequence $s_1 \leq s_2 \leq \cdots \leq s_n$
   such that $s_i$'s are the labels in which $E$ and $D$ steps located.
   In Figure~\ref{labels}, 
   the sequence corresponding to the path is $1124$.
   Then, we trace sequence representations of $(k, k)$-Fuss-\Sch path as follows:
   \begin{enumerate}
      \item Start with two numbers $1$ and $2$ on a horizontal line, 
      and read a sequence from $s_{1}$ to $s_{n}$.
      \item For $m$ consecutive east steps labeled by $s_j = s_{j+1} = \cdots s_{j+m-1}$,
      \begin{enumerate}
         \item replace each number $i(> s_j)$ with $i + m(k + 1)$, and
         \item replace $s_{j}$ with 
               $s_j, s_j + 1, s_j, s_j + 2, s_j, . . . , s_j, s_j + m(k + 1), s_j$.
      \end{enumerate}   
      \item For a diagonal step labeled by $s_j$,
      \begin{enumerate}
         \item replace each number $i(> s_j)$ with $i + k + 1$, and 
         \item replace $s_{j}$ with 
               $s_j, s_j + 1, s_j, s_j + 2, s_j, . . . , s_j, s_j + (k + 1), s_j$.
      \end{enumerate}
      \item For a north step, do nothing. 
   \end{enumerate}   
   Then the resulting sequence gives a sparse noncrossing partition of [2(k+1)n + 2]
   in which the elements at the number $i$ positions are the elements of $i$th block.
   See Example~\ref{ncexample}. 
   
   \begin{example}
   \label{ncexample}
   The sparse noncrossing partition corresponding to 
   a $(k, k)$-Fuss-\Sch path in Figure~\ref{labels}
   is given in Figure~\ref{ncbijection}.
   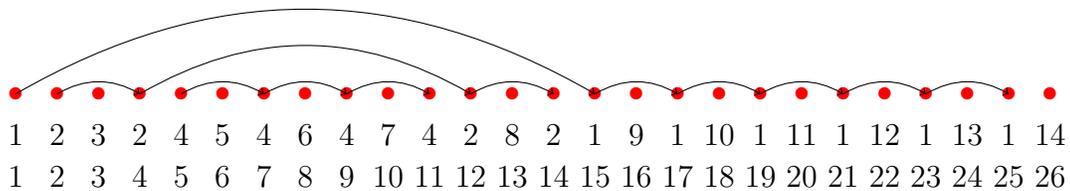
\begin{figure}[h]
   \begin{center}
   \begin{tikzpicture}[scale=0.55]
      \hspace{0cm}
      \draw[red] (0,1) node{\textbullet};
      \draw[red] (1,1) node{\textbullet};
      \draw[red] (2,1) node{\textbullet};
      \draw[red] (3,1) node{\textbullet};
      \draw[red] (4,1) node{\textbullet};
      \draw[red] (5,1) node{\textbullet};
      \draw[red] (6,1) node{\textbullet};
      \draw[red] (7,1) node{\textbullet};
      \draw[red] (8,1) node{\textbullet};
      \draw[red] (9,1) node{\textbullet};
      \draw[red] (10,1) node{\textbullet};
      \draw[red] (11,1) node{\textbullet};
      \draw[red] (12,1) node{\textbullet};
      \draw[red] (13,1) node{\textbullet};
      \draw[red] (14,1) node{\textbullet};
      \draw[red] (15,1) node{\textbullet};
      \draw[red] (16,1) node{\textbullet};
      \draw[red] (17,1) node{\textbullet};
      \draw[red] (18,1) node{\textbullet};
      \draw[red] (19,1) node{\textbullet};
      \draw[red] (20,1) node{\textbullet};
      \draw[red] (21,1) node{\textbullet};
      \draw[red] (22,1) node{\textbullet};
      \draw[red] (23,1) node{\textbullet};
      \draw[red] (24,1) node{\textbullet};
      \draw[red] (25,1) node{\textbullet};
      \draw (0,0) node{$1$};
      \draw (1,0) node{$2$};
      \draw (2,0) node{$3$};
      \draw (3,0) node{$2$};
      \draw (4,0) node{$4$};
      \draw (5,0) node{$5$};
      \draw (6,0) node{$4$};
      \draw (7,0) node{$6$};
      \draw (8,0) node{$4$};
      \draw (9,0) node{$7$};
      \draw (10,0) node{$4$};
      \draw (11,0) node{$2$};
      \draw (12,0) node{$8$};
      \draw (13,0) node{$2$};
      \draw (14,0) node{$1$};
      \draw (15,0) node{$9$};
      \draw (16,0) node{$1$};
      \draw (17,0) node{$10$};
      \draw (18,0) node{$1$};
      \draw (19,0) node{$11$}; 
      \draw (20,0) node{$1$};
      \draw (21,0) node{$12$};
      \draw (22,0) node{$1$};
      \draw (23,0) node{$13$};
      \draw (24,0) node{$1$};
      \draw (25,0) node{$14$};
      \draw (0,-1) node{$1$};
      \draw (1,-1) node{$2$};
      \draw (2,-1) node{$3$};
      \draw (3,-1) node{$4$};
      \draw (4,-1) node{$5$};
      \draw (5,-1) node{$6$};
      \draw (6,-1) node{$7$};
      \draw (7,-1) node{$8$};
      \draw (8,-1) node{$9$};
      \draw (9,-1) node{$10$};
      \draw (10,-1) node{$11$};
      \draw (11,-1) node{$12$};
      \draw (12,-1) node{$13$};
      \draw (13,-1) node{$14$};
      \draw (14,-1) node{$15$};
      \draw (15,-1) node{$16$};
      \draw (16,-1) node{$17$};
      \draw (17,-1) node{$18$};
      \draw (18,-1) node{$19$};
      \draw (19,-1) node{$20$}; 
      \draw (20,-1) node{$21$};
      \draw (21,-1) node{$22$};
      \draw (22,-1) node{$23$};
      \draw (23,-1) node{$24$};
      \draw (24,-1) node{$25$};
      \draw (25,-1) node{$26$};      
      \draw[->] (4,1) to[bend left] (6,1);
      \draw[->] (6,1) to[bend left] (8,1);
      \draw[->] (8,1) to[bend left] (10,1);
      \draw[->] (1,1) to[bend left] (3,1);
      \draw[->] (3,1) to[bend left] (11,1); 
      \draw[->] (11,1) to[bend left] (13,1);  
      \draw[->] (0,1) to[bend left] (14,1);
      \draw[->] (14,1) to[bend left] (16,1);
      \draw[->] (16,1) to[bend left] (18,1);
      \draw[->] (18,1) to[bend left] (20,1);
      \draw[->] (20,1) to[bend left] (22,1); 
      \draw[->] (22,1) to[bend left] (24,1); 
   \end{tikzpicture}
   \end{center}
   \vspace{-0.2cm}
   \caption{Sparse noncrossing partition of $[26]$ with $14$ blocks,
   $(\{1, 15, 17, 19, 21, 23, 25\}$, $\{2, 4, 12, 14\}$, $\{3\}$, $\{5, 7, 9, 11\}$, $\{6\}$,
   $\{8\}$, $\{10\}$, $\{13\}$, $\{16\}$, $\{18\}$, $\{20\}$, $\{22\}$, $\{24\}$, $\{26\})$.}
   \label{ncbijection}
   \end{figure}    
   \end{example}
   
   See the following conjecture about
   characteristics of sparse noncrossing partitions from $(k, k)$-Fuss-\Sch paths.

   \begin{conjecture}      
      The set of large $(k, k)$-Fuss-\Sch paths of 
      type $\lambda = \lambda_1 \lambda_2 \ldots \lambda_\ell$ of length $n$
      is in bijection with 
      the set of sparse noncrossing partitions of $[2(k+1)n+2]$ 
      with $2$ connected components such that
      \begin{enumerate}
         \item
         the arc type is
         $((k+1)\lambda_1, (k+1)\lambda_2, \dots, (k+1)\lambda_\ell, (k+1)^{n-\abs{\lambda}})$,
         \item
         the set of $(i(k+1)+2)$th blocks consists of 
         $n-\abs{\lambda}$ blocks of arc type $k+1$ and $\abs{\lambda}$ singleton blocks
         where $0 \leq i \leq n-1$, and  
         \item
         the set of last $t(k+1)$ blocks has at least $t(k-1)+1$ singleton blocks for $t \geq 1$.
      \end{enumerate}
   \end{conjecture}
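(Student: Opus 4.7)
The plan is to establish the conjectured bijection via the map $\Phi$ described in the preceding paragraphs (start with the sequence $(1,2)$, then iteratively apply the expansion rules for east runs and diagonal steps) together with an inverse $\Psi$. I would first prove by induction on the number of $E$ and $D$ steps that $\Phi$ produces a sparse noncrossing partition of length $2(k+1)n+2$. Sparseness is preserved at each expansion because the pattern $s_j, s_j+1, s_j, s_j+2, \ldots, s_j, s_j+m(k+1), s_j$ interleaves $s_j$ with pairwise distinct new labels. The noncrossing property survives because the new arcs of $s_j$ all nest inside the region vacated by the single $s_j$ being expanded, while the interior elements $s_j + c$ enter as fresh singletons. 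The total length is a direct telescoping: each east run of length $m$ contributes $2m(k+1)$ new positions and each diagonal contributes $2(k+1)$, giving $2(k+1)n$ on top of the initial $2$.

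Next, condition~(1) follows from the construction: the block arising from an east run of length $\lambda_i$ receives $(k+1)\lambda_i$ arcs, each diagonal step creates a block of arc length $k+1$, and every other label stays a singleton. Condition~(2) requires identifying the $(i(k+1)+2)$-th block, in the ordering by least element, with the block originating from the row label $i(k+1)+2$; this is immediate because the expansion rules create new labels in strictly increasing order, so the block index in the smallest-element ordering agrees with the label value. Among the $n$ row labels, exactly the $n-\abs{\lambda}$ whose rows carry diagonals get expanded into blocks of arc type $k+1$, while the remaining $\abs{\lambda}$ stay singletons. The count of exactly two connected components reflects that every newly created arc strictly encloses its inserted interior singletons, so the only break in the sequence arises from the terminal portion of the partition that is never reached by an arc from below.

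The main obstacle will be condition~(3), which is what encodes the \Sch constraint $y \geq kx$ separating valid paths from arbitrary lattice walks. My plan is to interpret the last $t(k+1)$ blocks as those corresponding to the bottom $t$ levels of the shape (roughly rows $y \leq tk$ together with their adjacent lines), and then to translate \emph{singleton block} into \emph{line with no east step or row with no diagonal}. The inequality $y \geq kx$ then forces at least $t(k-1)+1$ such absences in the bottom $t$ levels, because the path can use only a limited number of $E$ and $D$ steps below height $tk$. Conversely, given any sparse noncrossing partition satisfying all three conditions, the inverse $\Psi$ reads off the steps by peeling blocks in decreasing label order and translating each arc structure back into an east run or a diagonal step; the singleton counts in condition~(3) are precisely what guarantee that the reconstructed path never crosses $y=kx$. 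Verifying that condition~(3) is both necessary and sufficient for the \Sch constraint is where I expect the bulk of the technical work.
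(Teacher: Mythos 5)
The statement you are trying to prove is stated in the paper as a \emph{conjecture}: the authors give the sequence-tracing construction (start from $1,2$, expand each east run and each diagonal step) and one worked example, but they offer no proof, so there is nothing in the paper to compare your argument against. Your plan is clearly the intended one --- you take exactly the map $\Phi$ the paper describes --- and your forward-direction bookkeeping is sound: the length count $2+2(k+1)\abs{\lambda}+2(k+1)(n-\abs{\lambda})=2(k+1)n+2$ is correct, sparseness and noncrossingness are preserved by each expansion, the arc-type claim in condition~(1) and the identification of the $(i(k+1)+2)$th blocks in condition~(2) match the paper's Example~\ref{ncexample}. But what you have written is a proof \emph{outline}, not a proof, and the parts you defer are precisely the parts that make this a conjecture rather than a theorem.

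Concretely, three things are missing. First, the inverse map $\Psi$ is only gestured at (``peeling blocks in decreasing label order''); to get a bijection you must show that \emph{every} sparse noncrossing partition satisfying (1)--(3) with two connected components is hit, i.e.\ that $\Psi$ is well defined on the whole codomain and always reconstructs a lattice path whose diagonal steps sit in the $ik$th rows and which respects the type. Second, you assert that the image always has exactly two connected components because ``the only break arises from the terminal portion,'' but this is delicate: the paper itself warns that the component structure degenerates (the first component becomes a singleton) exactly when the path begins with a diagonal step, so the location and number of component breaks depends nontrivially on which of the labels $1$ and $2$ ever get expanded; this needs an actual argument, not a heuristic. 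Third, and most importantly, you concede that the equivalence of condition~(3) with the constraint $y\ge kx$ is ``where I expect the bulk of the technical work'' --- note that your proposed bound of roughly $t$ east runs plus $t$ diagonal rows in the bottom $t$ levels gives at most $2t$ non-singletons, whereas condition~(3) demands at most $2t-1$, so there is an off-by-one you must account for (the bottom level cannot simultaneously carry a diagonal in row $tk$ and a full complement of east runs below it). Until (i)--(iii) are carried out, the conjecture remains open and your submission should be presented as a strategy, not a proof.
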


   Note that, if the second connected component is a singleton block,
   the corresponding $(k, k)$-Fuss-\Sch path is a small $(k, k)$-Fuss-\Sch path.
   Hence, we have a next conjecture. 
   
   \begin{conjecture} 
      The set of small $(k, k)$-Fuss-\Sch paths of 
      type $\lambda = \lambda_1 \lambda_2 \ldots \lambda_\ell$ of length $n$
      is in bijection with 
      the set of connected sparse noncrossing partitions of $[2(k+1)n+1]$ such that
      \begin{enumerate}
         \item
         the arc type is 
         $((k+1)\lambda_1, (k+1)\lambda_2, \dots, (k+1)\lambda_\ell, (k+1)^{n-\abs{\lambda}})$,
         \item
         the set of $(i(k+1)+2)$th blocks consists of 
         $n-\abs{\lambda}$ blocks of arc type $k+1$ and $\abs{\lambda}$ singleton blocks
         where $0 \leq i \leq n-1$, and 
         \item
         the set of last $t(k+1)$ blocks has at least $t(k-1)+1$ singleton blocks for $t \geq 1$. 
      \end{enumerate}
   \end{conjecture}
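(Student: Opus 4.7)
The plan is to promote the explicit construction described in the paragraph preceding Example~\ref{ncexample} to the desired bijection. Given a small $(k,k)$-Fuss-\Sch path of type $\lambda$, apply that construction to obtain a sparse noncrossing partition of $[2(k+1)n+2]$; since the path is small, the final element $2(k+1)n+2$ remains in the singleton block generated from the initial starting number $2$, which forms a separate connected component (as observed just before the statement). Deleting this singleton yields a connected sparse noncrossing partition of $[2(k+1)n+1]$ with $(k+1)n+1$ blocks, the claimed target. Sparseness and noncrossingness of the output can be shown by induction on the number of non-north steps: each replacement inserts new arcs nested inside the interval formerly occupied by a single $s_j$, and the pattern $s_j, s_j{+}1, s_j, s_j{+}2, \ldots$ prevents two consecutive positions from sharing a block.

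Conditions (1) and (2) follow directly from the replacement rules. An east run of length $\lambda_i$ sharing a common label $s$ generates a block with exactly $(k+1)\lambda_i$ arcs, and a diagonal generates a block with exactly $k+1$ arcs, yielding the claimed arc type. The positions $i(k+1)+2$ correspond precisely to the labels of the ``yellow'' rows $(n-i)k$ where diagonal steps are allowed; among these, exactly the $n-\abs{\lambda}$ rows containing a diagonal step produce a block of arc type $k+1$, while the remaining $\abs{\lambda}$ rows produce singletons.

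The main obstacle is condition (3). The last $t(k+1)$ blocks correspond under the labelling to the bottom $t$ yellow rows $k, 2k, \ldots, tk$ together with the $tk$ horizontal lines $y=0, 1, \ldots, tk-1$. The number of non-singletons among them equals the sum of the number of east runs at lines $y<tk$ and the number of diagonals in rows $k, 2k, \ldots, tk$. The key estimate is that the first time the path reaches height $y=tk$, its $x$-coordinate is at most $t$ by the constraint $y\geq kx$; hence at most $t$ east or diagonal steps occur in that initial portion, and the number of non-singletons in the last $t(k+1)$ blocks is at most $t$. Since $t \leq 2t-1$ for $t\geq 1$, this yields at least $t(k-1)+1$ singletons, as required.

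Finally I would construct the inverse. Given a partition satisfying (1)–(3), each non-singleton block at a label of the form $i(k+1)+2$ is interpreted as a diagonal step in row $(n-i)k$, and each other non-singleton block of arc type $(k+1)\lambda_j$ as an east run of length $\lambda_j$ at the corresponding horizontal line. Listing these labels with appropriate multiplicities (one per diagonal, $\lambda_j$ copies per east run) produces the sequence $s_1 \leq \cdots \leq s_n$, from which the path is reconstructed by placing the indicated east and diagonal steps and filling the rest with north steps. One then checks that the partition conditions imply the reconstructed path stays weakly above $y=kx$ (from condition~(3) applied at every height $tk$), has no diagonal ending on $y=kx$ (from connectedness together with the identification of the singleton $\{2(k+1)n+2\}$ in the forward direction), and has type~$\lambda$ (from~(1)). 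A short induction on $n$ then verifies that the two maps are mutual inverses.
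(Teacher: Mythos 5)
This statement is left as a conjecture in the paper (no proof is supplied), so there is nothing to compare your argument against; it must stand on its own. Your forward direction is largely sound and follows the construction the paper itself sets up: the verification of conditions (1) and (2) from the replacement rules is correct, and your estimate for condition (3) — that all east runs on lines $y<tk$ and all diagonals in rows $k,2k,\dots,tk$ lie in the initial segment of the path ending at its first visit to height $tk$, whose $x$-coordinate is at most $t$, so the last $t(k+1)$ blocks contain at most $t\le 2t-1$ non-singletons — is a genuinely correct and useful observation. (You do lean on the paper's own unproved remark for the claim that smallness forces $\{2(k+1)n+2\}$ to be a singleton second component; this needs the separate observation that a small path must end with an east step on the top line, so that block $1$ reaches position $2(k+1)n+1$.)

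The genuine gap is in the inverse direction. Your reconstruction uses only the \emph{profile} of the partition (which block indices are non-singletons and how many arcs each has), and you claim condition (3) forces the reconstructed path to stay weakly above $y=kx$. It does not: condition (3) bounds the number of non-singleton blocks, i.e.\ the number of east \emph{runs} at low heights, not the number of east \emph{steps}. For instance, a profile in which the block labelled by the bottom line $y=0$ carries $2(k+1)$ arcs (an east run of length $2$ at height $0$) and all other low blocks are singletons satisfies condition (3) for every $t$, yet reconstructs to a path that starts $EE$ and immediately violates $y\ge kx$. If the conjecture is true, what rules such profiles out is not (3) but the interaction of sparseness, noncrossingness, connectedness, and the ordering of blocks by minima (a sparse block with many arcs needs a long interval to live in, which the block-index constraints forbid near the bottom); none of that is in your argument. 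Relatedly, since your inverse factors through the profile, proving the maps are mutually inverse requires showing that each admissible profile is realized by \emph{exactly one} partition satisfying all the stated conditions — this uniqueness is exactly the hard content of the bijection and is not addressed by the concluding ``short induction on $n$.'' As written, the proposal establishes injectivity of the forward map into the target set but not surjectivity, so the conjecture remains open under your argument.
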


   However, it has a different correspondence for the case that 
   the first connected component is a singleton block.
   In this case, all the partitions counts 
   $(k, k)$-Fuss-\Sch paths starting with a diagonal step.

\bibliographystyle{plain}

\def\cprime{$'$}

\end{document}